\documentclass[11pt]{amsart}
\usepackage{amsfonts}
\usepackage{graphicx}
\usepackage{amsmath}
\usepackage{amsthm}
\usepackage{amssymb}
\usepackage{amscd}
\usepackage{color}
\usepackage{url}

\usepackage{graphicx}
\usepackage{microtype}
\usepackage{enumitem}
\usepackage{pinlabel}
\usepackage[top=1.4in,bottom=1.4in,left=1.4in,right=1.4in]{geometry}

\theoremstyle{definition}

\newtheorem{theorem}{Theorem}[section]
\newtheorem{lemma}[theorem]{Lemma}
\newtheorem{corollary}[theorem]{Corollary}
\newtheorem{proposition}[theorem]{Proposition}
\newtheorem*{proposition*}{Proposition}
\newtheorem{question}[theorem]{Question}

\theoremstyle{definition}
\newtheorem{definition}[theorem]{Definition}
\newtheorem{remark}[theorem]{Remark}

\newtheorem{example}[theorem]{Example}

\newtheorem*{acknowledge}{Acknowledgements}

\newcommand{\R}{\mathbb{R}}

\newcommand{\N}{\mathbb{N}}
\newcommand{\Q}{\mathbb{Q}}

\newcommand{\id}{id}

\DeclareMathOperator{\supp}{supp}

\DeclareMathOperator{\Homeo}{Homeo}
\DeclareMathOperator{\mcg}{Map}

\newcounter{notes}
%


\renewcommand{\paragraph}[1]{\medskip 
\noindent \textbf{#1}}

\bibliographystyle{plain}

\title{Automatic continuity for homeomorphism groups of noncompact manifolds}
\author{Kathryn Mann}
\date{}

\begin{document}

\begin{abstract}
We extend the proof of automatic continuity for homeomorphism groups of manifolds to non-compact manifolds and manifolds with marked points and their mapping class groups.  Specifically, we show that, for any manifold $M$ homeomorphic to the interior of a compact manifold, and a set $X \subset M$ homeomorphic to the union of a Cantor set and finite set, the {\em relative homeomorphism group} $\Homeo(M, X)$ and the {\em mapping class group}  $\Homeo(M, X)/\Homeo_0(M,X)$ have the property that any homomorphism from such a group to any separable topological group is necessarily continuous.
\end{abstract}

\maketitle

\section{Introduction} 

In \cite{MannAutCont} it was shown that the group of homeomorphisms of a compact manifold $M$ (equipped with the compact-open topology) satisfies {\em automatic continuity}: every homomorphism from this group to any separable, topological group is necessarily continuous.  The proof also applies to compact manifolds with boundary and to the subgroup $\Homeo(M, X)$ of homeomorphisms of any such manifold $M$ preserving a submanifold $X \subset M$ of dimension at least one.

Here we treat the remaining cases where $M$ is homeomorphic to the interior of a compact manifold, where $X$ has dimension 0, and also where $X$ is a Cantor set.  We also discuss automatic continuity for {\em mapping class groups} of infinite type surfaces, answering a question posed by N. Vlamis.  
Our motivation for the study of the invariant Cantor set case comes from the frequent appearance of groups of Cantor set-preserving homeomorphisms in dynamics.  For instance, 
Calegari \cite{Calegari} studies infinite groups acting on $\R^2$ with bounded orbits by collapsing each bounded component of the complement of an orbit closure to a point to produce an action of the group on a plane with an invariant finite or Cantor set, hence a homomorphism to $\Homeo(\R^2, X)$ where $X$ is finite or Cantor; this is exactly the kind of setting we have in mind.  

Our proofs use a condition formulated by Rosendal and Solecki in \cite{RS}.

\begin{definition}
A topological group $G$ is \emph{Steinhaus} if there is some $n \in \N$ such that, whenever $W \subset G$ is a symmetric set such that countably many left-translates of $W$ cover $G$, there exists a neighborhood of the identity of $G$ contained in $W^n$.  
\end{definition}  

As shown in \cite[Prop. 2]{RS}, the Steinhaus property implies automatic continuity (via a straightforward Baire category argument).  We show the following.

\begin{theorem} \label{thm:main} 
Let $M$ be a manifold, either compact or homeomorphic to the interior of a compact manifold with boundary.  Let $X \subset M$ be a set consisting of finitely many isolated points, or a Cantor set, or both. 
Then the group $\Homeo(M, X)$ of homeomorphisms preserving $X$ (setwise) is Steinhaus, hence has automatic continuity.  
\end{theorem}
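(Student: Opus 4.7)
The plan is to verify the Steinhaus property directly. Given a symmetric $W \subset G := \Homeo(M,X)$ with countably many left-translates covering $G$, I want to produce a fixed $n$ (independent of $W$) and an identity neighborhood $V \subset W^n$.

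The first step would be to establish a \emph{local} Steinhaus statement for well-chosen open sets $U \subset M$: either (a) an open ball disjoint from $X$, (b) a small neighborhood of an isolated point of $X$, or (c) a clopen product-neighborhood of a clopen subset of the Cantor set $X$. For each such $U$ let $G_U := \{h \in G : \supp(h) \subset U\}$. Using displaceability, pick $\varphi \in G$ with $\varphi(U) \cap U = \emptyset$; then $G_U$ and $\varphi G_U \varphi^{-1}$ commute, and a standard commutator/doubling argument (as in \cite{MannAutCont}) shows that if countably many translates of $W$ cover $G$, then in fact some bounded power $W^k$ contains an identity neighborhood of $G_U$. This part is essentially the same as in the compact case, once one checks that homeomorphisms of type (b) and (c) admit the required displacing maps inside $G$ --- for (c) this amounts to the classical fact that one can freely permute clopen pieces of a Cantor set by an ambient homeomorphism of $M$.

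The second step is a \emph{uniformly bounded fragmentation lemma}: there exists $N$ and an identity neighborhood $V_0 \subset G$ such that every $h \in V_0$ factors as $h = h_1 \cdots h_N$ with each $h_i \in G_{U_i}$ for some $U_i$ of type (a), (b), or (c). Away from $X$ and from the ends of $M$ this is the classical manifold fragmentation. Near an isolated point of $X$ one can isotope a small perturbation of the identity to fix the point, reducing to the disjoint-from-$X$ case. Near the Cantor set one organizes $X$ by a clopen partition into finitely many product-neighborhoods (possible for $h$ sufficiently close to the identity in the compact-open topology) and fragments inside each; bounding the number of pieces uniformly uses that a small enough $h$ only permutes clopen blocks of this partition trivially. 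At the ends of $M$ (where $M$ is the interior of a compact manifold with boundary) one uses a collar to push the ``support at infinity'' of a small $h$ into a compact collar neighborhood, where it can then be fragmented by finitely many of the sets above.

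Combining these, a sufficiently small identity neighborhood $V \subset V_0$ has each fragment $h_i$ lying in a $W^k$-neighborhood inside $G_{U_i}$, giving $V \subset W^{kN}$ and the Steinhaus property with constant $n = kN$. The main obstacle I expect is the fragmentation lemma in the Cantor-set case: one must factor arbitrarily small homeomorphisms of $M$ preserving $X$ setwise into a \emph{bounded} number of pieces of the prescribed local type, and both the number of pieces and the local models must be uniform over $h$ in some fixed identity neighborhood. Making this uniformity compatible with the non-trivial local topology of the Cantor set (and with the product collar at infinity) is where the real work should concentrate; the displaceability and doubling arguments of step one are more routine extensions of \cite{MannAutCont}.
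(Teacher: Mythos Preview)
Your step 2 has a genuine gap at the ends of $M$.  A homeomorphism $h$ close to the identity in the compact-open topology need not have compact support: for $M=\mathrm{int}(\bar M)$, after the first factorization $h=k\cdot h'$ with $k$ compactly supported, the collar piece $h'$ is typically supported on all of $[2,\infty)\times\partial\bar M$.  There is no way to ``push its support at infinity into a compact collar neighborhood''---no conjugation or isotopy in $\Homeo(M)$ compactifies a noncompact support---and the full collar is not displaceable in $M$, so your types (a), (b), (c), all of which are precompact and displaceable, cannot cover it with a bounded number of pieces.  The same issue bites the Cantor-set case if $M$ is noncompact, and even in the compact case your fragmentation into clopen product-neighborhoods of $X$ has no obvious bound on the number of pieces that is uniform over an identity neighborhood.

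The paper handles the ends by a completely different mechanism, taken from Rosendal--Solecki's proof for $\mathrm{Aut}(\Q,<)$.  One first applies Lemma~\ref{techlem} to the family of sets $A_\Lambda=\bigcup_{n\in\Lambda}(2n,2n+1)\times\partial\bar M$, $\Lambda\subset\N$ infinite, obtaining a \emph{specific} $\Lambda$ with $G(A_\Lambda)\subset W^8$ (all of it, not just an identity neighborhood).  Then one takes an uncountable almost-disjoint family $\{\Lambda_\alpha\}_{\alpha\in\R}$ and for each $\alpha$ a shift-like $f_\alpha$ adapted to $\Lambda_\alpha$; by pigeonhole two of these lie in the same translate $g_iW$, so $f_\alpha^{-1}f_\beta\in W^2$, and this element conjugates (all but finitely many components of) the full collar support into $A_\Lambda$.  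The conjugators are produced \emph{inside} $W^2$ by a cardinality argument, not by smallness or fragmentation.  The Cantor-set case follows the same philosophy: density of $W^2$ near the identity (Lemma~\ref{W2dense}) supplies conjugators in $W^2$ that move the support into a fixed union of separating balls $A$ already satisfying $G(A)\subset W^8$; no uniform fragmentation near $X$ is needed.
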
 

By contrast, when $M$ is noncompact (but is homeomorphic to the interior of a compact manifold $\bar{M}$), the group $\Homeo_c(M)$ of compactly supported diffeomorphisms (equipped with the compact-open topology) does not have automatic continuity, as the obvious inclusion of $\Homeo_c(M)$ into $\Homeo(\bar{M})$ is not continuous.  

Before embarking on the proof of Theorem \ref{thm:main}, we give an application to automatic continuity for some mapping class groups of infinite type surfaces, answering Vlamis' question and raising several new problems.

\section{Mapping class groups} 
The {\em mapping class group} of a surface $\Sigma$ is the group of homeomorphisms up to isotopy, equivalently the group $\pi_0(\Homeo(\Sigma))$.  When $\Sigma$ is {\em finite type} (compact or homeomorphic to the interior of a compact surface), these are discrete groups, and have been known since the work of Dehn to be finitely generated.  By contrast, mapping class groups of {\em infinite type} surfaces provide interesting examples of totally disconnected topological groups.  

We are interested more generally in understanding the algebraic and topological properties of 
 $\pi_0$ of homeomorphism groups, and of relative homeomorphism groups, of noncompact manifolds.  Such ``generalized mapping class groups'' are Polish groups: indeed, for any manifold $M$, and closed set $X \subset M$, the group $\Homeo(M,X)$ is a closed subgroup of the Polish group $\Homeo(M)$ (endowed with the compact-open topology), hence is Polish, and since $\Homeo_0(M,X)$ is a closed, normal subgroup of $\Homeo(M,X)$, the quotient $\Homeo(M, X)/\Homeo_0(M,X)$ is also Polish.   The context that originally prompted this note is the case where $M$ is a closed surface and $X \subset M$ homeomorphic to a closed subset of a Cantor set.  In this case, $\Homeo(M, X) \cong \Homeo(M - X)$ and $M-X$ is a finite genus topological surface (of infinite type, provided that $X$, its space of ends, is infinite).  
 \footnote{For this case where $M$ is a surface, that the mapping class group $\Homeo(M,X)/Homeo_0(M, X)$ is Polish was observed earlier in \cite{APV} using the property that these groups are the automorphism groups of a countable structure, namely the curve complex of the surface.}  
 
An easy argument (see \cite[Cor. 3]{RS}) shows that, provided that $G$ is a Polish, Steinhaus group, any Polish quotient group $H$ of $G$ is also Steinhaus.  Thus, we have the following consequence of Theorem \ref{thm:main}.

\begin{corollary}
Let $M$ be either compact or homeomorphic to the interior of a compact manifold with boundary, and $X \subset M$ the union of a Cantor set and finite set.  Then the {\em mapping class group} 
$\Homeo(M, X)/\Homeo_0(M,X)$ is Steinhaus, hence has the automatic continuity property.  
\end{corollary}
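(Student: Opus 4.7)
The plan is to deduce this corollary as a formal consequence of Theorem~\ref{thm:main}, using the fact recorded in \cite[Cor.~3]{RS} that the Steinhaus property passes to Polish quotients of Polish groups. So the whole task reduces to checking the two hypotheses required to invoke that result, both of which were essentially already noted in the preceding discussion.

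First I would verify that $\Homeo(M,X)$ is a Polish Steinhaus group. The Steinhaus conclusion is exactly Theorem~\ref{thm:main} applied to the pair $(M,X)$ under our hypotheses on $M$ and $X$. The Polish property comes from the observation made earlier in the excerpt: $\Homeo(M,X)$ is a closed subgroup of the Polish group $\Homeo(M)$ (with the compact-open topology), and closed subgroups of Polish groups are Polish. Next I would verify that $\Homeo_0(M,X)$ is a closed normal subgroup, so that the quotient acquires a well-defined Polish topological group structure. Normality is automatic since $\Homeo_0(M,X)$ is the identity component (it is invariant under conjugation by any homeomorphism), and closedness was likewise noted in the preceding paragraph. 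Thus the mapping class group $\Homeo(M,X)/\Homeo_0(M,X)$ is a Polish topological group.

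With these ingredients in hand, I would apply \cite[Cor.~3]{RS}: the Polish quotient of a Polish Steinhaus group is Steinhaus. Concretely, given a symmetric $W$ in the quotient whose countably many left-translates cover it, the preimage $q^{-1}(W)$ under the quotient map $q$ is a symmetric subset of $\Homeo(M,X)$ whose countably many left-translates cover $\Homeo(M,X)$; the Steinhaus property then yields an identity neighborhood inside $q^{-1}(W)^n = q^{-1}(W^n)$, and pushing forward by $q$ (which is an open map) produces an identity neighborhood in $W^n$. Automatic continuity then follows via the standard Baire-category argument of \cite[Prop.~2]{RS}.

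There is no real obstacle here: all the substantive work is contained in Theorem~\ref{thm:main}, and the corollary is a clean functorial consequence. The only thing that could conceivably need more care is confirming that the results of Rosendal--Solecki that I wish to cite are stated and proved in sufficient generality to cover the present situation, but this is purely a bookkeeping matter rather than a mathematical difficulty.
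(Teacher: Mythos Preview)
Your proposal is correct and follows exactly the paper's approach: the corollary is derived directly from Theorem~\ref{thm:main} together with \cite[Cor.~3]{RS}, after noting that $\Homeo(M,X)$ is Polish and that $\Homeo_0(M,X)$ is a closed normal subgroup so the quotient is Polish. The paper does not spell out the preimage argument you include, but this is precisely the content of the cited result.
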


However, there are many examples of pairs where $\Homeo(M, X)$ fails to have automatic continuity, even in the case where $M = S^2$ and $X$ is a compact set.  
To see this, we recall first an example from \cite{RosendalAutCont}.  
\begin{example}[Example 1.4 of \cite{RosendalAutCont}] \label{ex:finite_index_nonopen}
Suppose $F$ is a finite group.  Let $\mathcal{U}$ be a nonprincipal ultrafilter on $\N$.  Then $H := \{(f_n) \in F^\N : \mathcal{U}_n(f_n) = 1\}$ is a finite index (of index equal to $|F|$) subgroup of the infinite product $F^\N$ that is not open.  Thus, the permutation action of $F^\N$ on cosets of $H$ gives a discontinuous homomorphism from $F^\N$ to the symmetric group $\mathrm{Sym}(F^\N/H)$.  
\end{example}

\begin{example} 
Let $X$ be a compact subset of $S^2$, homeomorphic to the disjoint union of a Cantor set $C$ and a countable set $Q$, with the properties that 
\begin{itemize}[label=$\bullet$, parsep=0pt, partopsep=2pt, itemsep=0pt]
\item The space of accumulation points of $Q$ in $C$ is homeomorphic to the one point compactification of $\N$.
\item For each $n \in \N$, $\overline{Q} \cap C$ has exactly $k$ points of rank $n$ (in the sense of Cantor-Bendixon rank).
\item For each $n \in \N$, the homeomorphism group of $X$ acts transitively on the $k$ points of rank $n$ in $\overline{Q} \cap C$.
\end{itemize}
Such a set is easily constructed, for instance one may take for each $n \in \N$ the union of a Cantor set and the set $\omega^n k +1$ with the order topology, glued along the k points of $\omega^n k +1$ of maximal rank; declare the $n$th such set to have diameter $2^{-n}$, and have them converge (in the Hausdorff sense) to a single point. 

Then $\Homeo(S^2, X)$ acts on $X$ by homeomorphisms, and the map $\Homeo(S^2, X) \to \Homeo(X)$ is surjective (see e.g. \cite{Richards} for a general proof that the homeomorphism group of a surface surjects to the space of homeomorphisms of its ends).  Considering only the invariant set $\overline{Q} \cap C$ gives a natural surjective homeomorphism $\Homeo(X) \to (\mathrm{Sym}_k)^\N$.   The map $\Homeo(S^2, X) \to (\mathrm{Sym}_k)^\N$ is open, 
so the composition with the homomorphism furnished by Example  \ref{ex:finite_index_nonopen} gives a discontinuous homomorphism from this group to a finite symmetric group.  
\end{example}

\begin{question} 
For which infinite type surfaces $\Sigma$ does $\mcg(\Sigma)$ have automatic continuity?   
\end{question} 
One might wish to treat separately the case where $\Sigma$ is finite genus and infinite genus, or the special case of finitely many ends but infinite genus.  

The argument for the non-examples above passes through the action of the homeomorphism group on the space of ends of the surface, suggesting two additional parallel questions. 

\begin{question}
Let $X$ be a closed subset of a Cantor set.  What topological conditions on $X$ ensure that $\Homeo(X)$ has automatic continuity?   Is some degree of local homogeneity required? 
\end{question}

\begin{question} 
Let $\Sigma$ be an infinite type surface.  Under what conditions on the topology of $\Sigma$ does the subgroup of homeomorphisms  fixing each end of $\Sigma$, and/or its identity component the {\em pure mapping class group} have automatic continuity? 
\end{question}

\section{General tools} 

This section contains some standard and some preliminary results to be used in the proof of Theorem \ref{thm:main}.  
We assume that manifolds are metrizable, but otherwise arbitrary.  For a topological manifold $M$ we take the standard compact-open topology on $\Homeo(M)$, which is separable and completely metrizable, i.e. Polish.   As recalled above in the introduction, if $X \subset M$ is a closed set, then $\Homeo(M, X)$ is a closed subgroup of $\Homeo(M)$, so also Polish, and in particular a Baire space.    
The following lemma is widely used in automatic continuity arguments.  
  
\begin{lemma} \label{W2dense}
Let $M$ be a manifold, $X \subset M$ a closed set, and $W \subset \Homeo(M,X)$ a symmetric set such that $\Homeo(M,X)$ is a countable union of left translates of $W$.  Then there exists a neighborhood $U$ of the identity in $\Homeo(M,X)$ such that $W^2$ is dense in $U$.
\end{lemma}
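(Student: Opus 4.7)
My plan is to run a standard Pettis/Steinhaus-style Baire category argument inside the Polish group $G := \Homeo(M,X)$. The three steps will be: (i) use Baire category to find an open set $V$ in which $W$ is dense, (ii) define $U$ as the neighborhood $V^{-1}V$ of the identity, and (iii) use joint continuity of multiplication together with symmetry of $W$ to show $U \subset \overline{W^2}$.

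For step (i), $G$ is Polish, hence Baire. By hypothesis, $G = \bigcup_{n} g_n W$, so some $g_n\overline{W}$ has nonempty interior, and translating by $g_n^{-1}$ (a homeomorphism of $G$) gives a nonempty open set $V \subset \overline{W}$. Equivalently, $W \cap V$ is dense in $V$.

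For steps (ii)--(iii), fix any $v_0 \in V$; then $U := V^{-1}V$ contains the open set $v_0^{-1}V \ni e$, so $U$ is a neighborhood of the identity. To see $W^2$ is dense in $U$, pick $u \in U$ and write $u = a^{-1}b$ with $a,b \in V$. For any open neighborhood $N$ of $u$, continuity of the map $\phi(x,y) = x^{-1}y$ at $(a,b)$ produces open sets $N_1 \ni a$, $N_2 \ni b$ with $N_1,N_2 \subset V$ and $\phi(N_1 \times N_2) \subset N$. Since $W$ is dense in $V$, we can choose $w_1 \in W \cap N_1$ and $w_2 \in W \cap N_2$, and by symmetry of $W$ we have $w_1^{-1} \in W$, so $w_1^{-1}w_2 \in W^2 \cap N$. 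Hence every neighborhood of $u$ meets $W^2$, proving $u \in \overline{W^2}$ and therefore $U \subset \overline{W^2}$, i.e.\ $W^2$ is dense in $U$.

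There is no real obstacle here: the whole argument is the classical Pettis lemma plus symmetry to replace $W^{-1}W$ with $W^2$. The only point requiring any care is the verification that $V^{-1}V$ is genuinely open and contains the identity, which is immediate once one fixes a base point of $V$, and the use of Polishness/Baire property, which is already recorded in the paragraph preceding the lemma.
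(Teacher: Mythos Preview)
Your proof is correct and is essentially the same Baire-category argument as in the paper, just with the details of the Pettis-type step spelled out explicitly: the paper simply notes that $W$ is non-meagre, hence dense in some open set, and then invokes symmetry to conclude $W^2$ is dense near the identity, whereas you carry out the $V^{-1}V$ computation in full.
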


\begin{proof} 
Since $\Homeo(M,X)$ is a Baire space, and each left translated of $W$ is homeomorphic to $W$, it follows that the set $W$ is not meagre, so dense in some open set of  $\Homeo(M,X)$.  Since $W$ is symmetric, it follows that $W^2$ is dense in a neighborhood of the identity. 

\end{proof}
  
The following technical lemma generalizes  \cite[Lemma 3.8]{MannAutCont}, which is modeled after arguments of Rosendal from \cite{RosendalSurfaces}.    Recall that the {\em support} of a homeomorphism $f$ is the closure of the set $\{x  \mid f(x) \neq x\}$, and a group $G$  is said to have {\em commutator length} $p$ if each element of $G$ can be written as a product of $p$ commutators in $G$.  

\begin{lemma} \label{techlem}
Let $M$ be a manifold, and $W \subset \Homeo(M)$ a symmetric set such that $\Homeo(M)$ is a countable union of left translates of $W$.  Let $\mathcal{A}$ be a family of open subsets of $M$ satisfying: 
\begin{enumerate}
\item There exists an infinite family of pairwise disjoint, closed sets $U_i \subset M$ such that each set $U_i$ contains an infinite family of pairwise disjoint sets belonging to $\mathcal{A}$. 
\item There exists $p \in \N$ such that, for each $A \in \mathcal{A}$, the group of homeomorphisms with support on $A$ has commutator length bounded by $p$.  
\end{enumerate}
Then there exists $A \in \mathcal{A}$ such that each homeomorphism supported on $A$ is contained in $W^{8p}$.  
\end{lemma}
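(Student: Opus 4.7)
The plan is to argue by contradiction, combining Lemma~\ref{W2dense} (a Baire category input) with a pigeonhole argument on the countable cover of $\Homeo(M)$ by left translates of $W$, following the template of Rosendal's proofs of Steinhaus-type properties for homeomorphism groups. Lemma~\ref{W2dense} supplies a neighborhood $V$ of the identity in $\Homeo(M)$ in which $W^2$ is dense. Since basic neighborhoods of the identity in the compact-open topology have the form $N(K,\epsilon) = \{g : d(g(x),x) < \epsilon \text{ for all } x \in K\}$, any homeomorphism whose support is small and well-placed relative to such a compact $K$ lies in $\overline{W^2}$.

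The key observation, using condition~(2), is that it suffices to produce some $A \in \mathcal{A}$ for which every homeomorphism supported on $A$ lies in $W^2$: each such $f$ then decomposes as a product of $p$ commutators of homeomorphisms supported on $A$; each factor is in $W^2$, so each commutator is in $W^8$ and hence $f \in W^{8p}$. To produce such an $A$, we exploit condition~(1). Within a well-chosen $U_i$, fix an infinite disjoint family $\{A_{i,j}\}_{j \in \N} \subset \mathcal{A}$ and choose a ``shift'' homeomorphism $T_i$ supported on $U_i$ with $T_i(A_{i,j}) = A_{i,j+1}$. For any $h$ supported on $A_{i,1}$, the infinite product $F(h) := \prod_{j \geq 0} T_i^{j}\, h\, T_i^{-j}$ is a well-defined homeomorphism of $M$ (the conjugates having pairwise disjoint supports), and the telescoping identity familiar from Mather-type perfectness arguments yields $h = [F(h), T_i]$; thus whenever $F(h)$ and $T_i$ both lie in $W^2$, we have $h \in W^8$.

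To conclude, assume for contradiction that for every $A \in \mathcal{A}$ some element supported on $A$ lies outside $W^{8p}$. Combining the countable cover $\Homeo(M) = \bigcup_n g_n W$ with a pigeonhole argument applied to the uncountably many parameter choices available---both the index $i$ ranging over the infinite family $\{U_i\}$ and the choice of disjoint $A_{i,j}$'s within each $U_i$---forces a pair of compatible configurations into the same left $W$-coset. Taking ratios yields elements of $W^2$ which, together with the commutator identity $h = [F(h), T_i]$, place some offending $h$ into $W^{8}$, a contradiction. The main obstacle is exactly this final pigeonhole step: one needs the construction to produce enough genuinely independent configurations that the countable coset structure of $W$ cannot separate them, which is precisely why condition~(1) requires infinitely many $U_i$ and infinitely many disjoint sets from $\mathcal{A}$ inside each $U_i$, rather than just one of each.
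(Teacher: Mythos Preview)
Your proposal has a genuine gap, rooted in a misunderstanding of how condition~(2) is used.  You write that ``it suffices to produce some $A \in \mathcal{A}$ for which every homeomorphism supported on $A$ lies in $W^2$,'' and then invoke the commutator decomposition.  But if $G(A) \subset W^2$ held outright, the commutator step would be vacuous: $W^2 \subset W^{8p}$ already.  The paper's pigeonhole argument does \emph{not} produce $G(A) \subset W^2$; what it produces is weaker: for each $f \in G(A)$ there exists some $w_f \in W^2$ that \emph{agrees with $f$ on $A$} but is supported on a larger set (the union of the $A_j$'s, or of the $U_i$'s).  The commutator hypothesis is precisely what upgrades ``agrees on $A$'' to ``equals'': writing $f = [a_1,b_1]\cdots[a_p,b_p]$ with $a_j,b_j \in G(A)$, one replaces each $a_j, b_j$ by an element $w_{a_j}, w_{b_j} \in W^2$ with controlled support and the correct restriction, arranged so that $[a_j,b_j] = [w_{a_j}, w_{b_j}]$ as actual homeomorphisms.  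This is the entire point of running the pigeonhole in two stages (once for the $U_i$, once for the $A_j \subset U_i$): the two levels give two different support bounds on the approximants, and one needs both to make the commutators match.

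Your pigeonhole step is also not right as stated.  There are only countably many $U_i$ and countably many $A_{i,j}$, so there is no ``uncountable family of configurations'' to feed into a coset-collision argument; nothing here resembles the almost-disjoint-family trick used later in the paper.  The paper's argument is instead a \emph{diagonal construction}: assuming every $U_i$ fails, pick a witness $f_i \in G(U_i)$ for each $i$ and glue them into a single homeomorphism $F$ (possible because the $U_i$ are closed and disjoint); then $F \in g_iW$ for some $i$, and $F$ itself is the sought element of $g_iW$ restricting to $f_i$ on $U_i$ --- contradiction.  This gives the ``matching on $U_i$'' conclusion, and the same diagonalization inside $U_i$ gives ``matching on $A$.''  Finally, your introduction of a shift $T_i$ with $T_i(A_{i,j}) = A_{i,j+1}$ is unjustified: nothing in the hypotheses guarantees the $A_{i,j}$ are mutually homeomorphic or that such a shift exists (and in any case this is the argument \emph{for} condition~(2) in specific examples, not a tool for proving the lemma from it).  Lemma~\ref{W2dense} is not used in the paper's proof of this lemma at all.
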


\begin{proof}

Let $G = \Homeo(M)$, and for any set $U \subset M$, let $G(U)$ denote the homeomorphisms with support contained in $U$.  Suppose that $G = \bigcup_{i=1}^\infty g_i W$, for some symmetric set $W$, and let $U_1, U_2, ...$ be the closed, disjoint sets from the statement of the lemma.  
\medskip 

\noindent \textit{Step 1. }  We first claim that there is some $U_i$ such that, for each $f \in G(U_i)$, there exists $w_f \in g_i W$, with support in the closure of $\bigcup_i U_i$, and such that the restriction of $w_f$ to $U_i$ agrees with $f$.
For if this statement does not hold, then there is a sequence of counterexamples $f_i \in G(U_i)$ such that each $f_i$ does not agree with the restriction to $U_i$ of any element of $g_i W$ supported on $\bigcup_i U_i$.  Define a homeomorphism $F(x)$ by 
$$F(x) = \left\{ \begin{array}{ll} f_i(x) & \text{ if } x \in U_i \text{ for some } i  \\
x & \text{ otherwise} 
 \end{array} \right.$$
By assumption, for some $i$ we have $F \in g_i W$, but $F$ restricts to $f_i$ on $U_i$.  This gives the desired contradiction. 

Now, given $f \in G(U_i)$, consider the homeomorphisms $w_{\id}$ and $w_{f}$ obtained above.  Then the restriction of $(w_{\id})^{-1} w_f$ to $U_i$ agrees with $f$ on $U_i$, and $(w_{\id})^{-1} w_f \in Wg_i^{-1} g_i W =  W^2$.  Thus, we conclude that for any $f \in G(U_i)$, there exists some element in $W^2$ agreeing with $f$ on $U_i$.  
\medskip

\noindent \textit{Step 2.} 
Let $U = U_i$ be the open set obtained from step 1, and let $A_1, A_2, ... $ be a countable disjoint family of open sets in $U$, with each $A_i \in \mathcal{A}$.   
We may apply the argument from Step 1 above to the family consisting of the closures of the $A_i$ in place of $U_1, U_2, ...$ to conclude that there exists $i$ such that for every $f \in G(A_i)$, there exists an element in $W^2$, with support contained in the closure of the union of the $A_i$, and agreeing with $f$ on $U$.   Forgetting subscripts, let $A = A_i$ denote this set. 

Let $f \in G(A)$.  Using the bounded commutator length assumption, write $f=[a_1, b_1]...[a_p, b_p]$, where $a$ and $b$ also have support in $A$.
Since in particular, each $a_j$ has support in $U$, there exists $w_{a_j} \in W^2$ with $\supp(w_{a_j}) \subset U$ and such that the restriction of $w_{a_j}$ to $U$ agrees with $a_j$.  There also exists $w_{b_j} \in W^2$ with $w_{b_j} \in G(A)$ and such that the restriction of $w_{b_j}$ to $A$ agrees with $b_j$.  Since $\supp(w_{a_j}) \cap \supp(w_{b_j}) \subset A$, we have $[a_j, b_j] = [w_{a_j}, w_{b_j}]$, hence 
$f = [w_{a_1}, w_{b_1}]...[w_{a_p}, w_{b_p}] \in W^{8p}.$

\end{proof}

The other general tool that we will use in the proof is the following.  

\begin{proposition} \label{prop_compact}
Let $M$ be a manifold, and suppose that $\Homeo(M) \subset g_i W$ for some symmetric set $W$.  Then there exists $n$, depending only on the dimension of $M$, and a neighborhood $U$ of the identity in $\Homeo(M)$ so that the following holds: 
If $K \subset M$ is any compact set, then any homeomorphism in $U$ with support contained in $K$ is an element of $W^n$.  
\end{proposition}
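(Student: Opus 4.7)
The plan is to deduce the proposition by combining Lemma \ref{techlem} with a dimension-dependent fragmentation for homeomorphisms of topological manifolds.

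For the first step, I would apply Lemma \ref{techlem} with $\mathcal{A}$ taken to be the family of open subsets $A \subset M$ that are locally finite disjoint unions of open Euclidean balls with pairwise disjoint closures. For any such $A = \bigsqcup_\alpha B_\alpha$, an element of $G(A)$ decomposes uniquely as $\prod_\alpha f_\alpha$ with $f_\alpha$ supported in $B_\alpha$; since $\Homeo_c(\R^d)$ is uniformly perfect with commutator length bounded by some $p = p(d)$, and commutators on disjoint balls combine into a single commutator on the disjoint union (the ``piecewise-combined'' product makes sense because the supports are disjoint), $G(A)$ has commutator length at most $p$. Condition (1) is easily arranged by packing shrinking pairwise disjoint closed balls in $M$, each with room for infinitely many disjoint elements of $\mathcal{A}$. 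The lemma yields a specific $A_0 \in \mathcal{A}$ with $G(A_0) \subset W^{8p}$, which I can further arrange to contain disjoint balls at every scale.

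For the second step, I would fix a neighborhood $U$ of the identity in $\Homeo(M)$ defined by a uniform displacement bound relative to a fixed locally finite cover of $M$ by small balls (so $U$ depends on $M$ but not on the compact set $K$). For any $f \in U$ with $\supp(f) \subset K$, a standard topological fragmentation via a $(d+1)$-chromatic cover of a neighborhood of $K$ by small balls produces a decomposition $f = f_0 \circ \cdots \circ f_d$ with each $f_i$ supported in a finite disjoint union of balls $V_i \in \mathcal{A}$. The number of fragments is $d+1$, depending only on the dimension.

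The main obstacle is the last step: bridging the gap between the single specific $A_0$ supplied by Lemma \ref{techlem} and the various $V_i$ arising in the fragmentation. Since $A_0$ contains disjoint balls of every scale, for each $V_i$ one can find an ambient homeomorphism $h_i$ of $M$ with $h_i(V_i) \subset A_0$, so that $h_i f_i h_i^{-1} \in G(A_0) \subset W^{8p}$ and hence $f_i \in h_i^{-1} W^{8p} h_i$. The delicate point is to control $h_i$ in a bounded power $W^q$, with $q = q(d)$ independent of $K$. I would arrange this by choosing $h_i$ to be supported in a set $\tilde A_i \in \mathcal{A}$ containing $V_i \cup h_i(V_i)$, and reapplying Lemma \ref{techlem} to an appropriate subfamily to extract such a uniform $q$. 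Assembling across the $d+1$ fragments gives $f_i \in W^{2q + 8p}$, hence $f \in W^{(d+1)(2q + 8p)}$, so the proposition holds with $n = n(d)$.
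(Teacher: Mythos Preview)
The paper does not give a self-contained proof of this proposition; it simply records that it follows from the argument in \cite{MannAutCont}. Your outline---use Lemma~\ref{techlem} to find a good set $A_0$, apply a $(d{+}1)$-color Edwards--Kirby fragmentation, then conjugate the fragments into $A_0$---is indeed the shape of that argument, and you have correctly isolated the crux: controlling the conjugating homeomorphisms $h_i$ in a uniform power of $W$ independent of $K$.

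Your proposed resolution of that step does not work as written. Lemma~\ref{techlem} produces a \emph{single} element $A$ of the family with $G(A)\subset W^{8p}$; it says nothing about any other member of $\mathcal A$. Your sets $\tilde A_i$ depend on $V_i$, hence on $K$ and on $f$, so ``reapplying Lemma~\ref{techlem} to an appropriate subfamily'' cannot force \emph{those particular} sets to be good---the lemma would just hand you another single good set, and you would need yet further conjugators to reach it. The argument is circular.

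The way this is handled in \cite{MannAutCont} (and, in miniature, in the Cantor-set proof later in this paper via Lemma~\ref{lem:expanding}) is through Lemma~\ref{W2dense}: one designs the family $\mathcal A$ and the fragmentation cover together so that whatever good set $A_0$ the lemma returns, each fragment $V_i$ can be carried into $A_0$ by a homeomorphism \emph{close to the identity}; then density of $W^2$ in a neighborhood of the identity supplies the conjugator in $W^2$ directly. That coordination between the cover and the family is where the real content lies, and it is what your sketch is missing.
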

This follows directly from the proof of the main theorem of automatic continuity for compact manifolds in \cite{MannAutCont}.

\section{First case: $X$ finite or empty.} 

We begin with the case $X = \emptyset$; given the results of \cite{MannAutCont}, this reduces to the case where $M$ is homeomorphic to the interior of a compact manifold with boundary.  
A major technical ingredient for the proof is adapted from Rosendal--Solecki's proof of automatic continuity for the group of order-preserving automorphisms of $\Q$ in \cite{RS}.  
The argument here also covers the case where $X$ is finite, by the following remark. 

\begin{remark}[$X = \emptyset$ case implies $X$ finite case]   \label{rk_emptytofinite} 
Suppose $M$ is the interior of a compact manifold $\bar{M}$, and $X \subset M$ is a finite set.  
Since $\R^n - \{0\}$ is homeomorphic to $(0,1) \times S^{n-1}$, the manifold $M - X$ is homeomorphic to the interior of a compact manifold $\bar{N}$ with $\partial \bar{N}$ the union of $\partial \bar{M}$ and a disjoint union of spheres, one for each point of $X$.   Though homeomorphisms of $N$ do not necessarily extend to homeomorphisms of $\bar{N}$, they do extend to the space obtained by one-point-compactifying each spherical end, and this gives a topological isomorphism $\Homeo(N) \to \Homeo(M, X)$.   
\end{remark} 

\begin{proof}[Proof of automatic continuity for $M$ noncompact, $X = \emptyset$]
Let $M$ be homeomorphic to the interior of a compact manifold $\bar M$.    Let $W \subset \Homeo_0(M)$ be a symmetric set such that $\Homeo_0(M) = \bigcup_{i \in \N} g_i W$.  

Identify a neighborhood of the ends of $M$ with $([0, \infty) \times \partial \bar{M}) \subset M$.   Let $U$ be a neighborhood of the identity in $\Homeo_0(M)$ small enough so that, if $f \in U$, then $f$ can be written as $f= k \circ h$ where $k$ has support on the compact set $K = M - ((3, \infty) \times \partial M)$, and $h$ has support on $[2, \infty )\times \partial \bar{M}$.    
Proposition \ref{prop_compact} shows that, provided $U$ is chosen small enough, there exists $n$ depending only on the dimension of $M$ such that $k \in W^n$.   We wish to prove the same is true for $h$.   

Any homeomorphism $h$ of $[0, \infty )\times \partial M$ can be factored as $h_1 h_2$ where each $h_i$ has support on a set of the form $X_i \times \partial \bar{M}$, and $X_i \subset [0, \infty)$ is an infinite disjoint union of open intervals homeomorphic in $[0, \infty)$ to $\bigcup_{n=1}^\infty (2n, 2n+1)$.  (See e.g. \cite[Prop. 5.1]{Scottish}.)
Thus, it suffices to find $n$ such that $W^n$ contains any homeomorphism with support on a set of the form $X \times \partial \bar{M} \subset [0, \infty) \times \partial \bar{M}$ where $X$ is homeomorphic to $\bigcup_{n \in \N} (2n, 2n+1)$.     

Let $X$ be such a set.  Reparametrizing $\R$,  we may assume that in fact
$$X = \bigcup_{n \in \N} (2n, 2n+1) \times \partial \bar{M}.$$
We begin by applying Lemma \ref{techlem} with the class $\mathcal{A}$ consisting of sets of the form $A_\Lambda = Y_\Lambda \times \partial \bar{M}$, where $Y_\Lambda = \bigcup_{n \in \Lambda} (2n, 2n+1)$ for some infinite set $\Lambda \subset \N$.   Note that $\mathcal{A}$ satisfies the hypotheses of the lemma, since 
\begin{enumerate}
\item We may write $\N$ as a countable disjoint union of infinite sets $\Lambda_i$, and define $U_i$ to be the closure of $Y_{\Lambda_i} \times \partial \bar{M}$.  Each such set contains a countable union of disjoint elements of $\mathcal{A}$.   
\item Any element supported on such a set $A_\Lambda$ may be written as a single commutator.  
\end{enumerate} 
The proof of item ii) is a standard argument, we briefly recall this for completeness:  given 
$f$ supported on such a set $Y_\Lambda \times \partial \bar{M}$, let $I_n \subset (2n, 2n+1)$ be closed intervals such that $\supp(f) \subset \bigcup_{n\in \Lambda} I_n \times \partial M$.  Let $T: \R \to \R$ be a homeomorphism with support in $Y_\Lambda$ such that $T(I_n) \cap I_n = \emptyset$;  abusing notation, identify $T$ with the homeomorphism $T \times \id$ of $[0,\infty) \times \partial \bar{M}$. 
Let $a = \prod_{j\geq 0} T^jfT^{-j}$, which has support in  $\bigcup_{n \in \Lambda} I_n$.  Then $a \circ T a^{-1} T^{-1} = f$.  
\medskip

Thus we conclude (using the notation from Lemma \ref{techlem}) that for some such set $A_\Lambda \in \mathcal{A}$, we have $G(A_\Lambda) \subset W^8$.  Now we apply a trick used in \cite{RS}. For $\alpha \in \R$, let $\Lambda_\alpha$ be infinite subsets of $\Lambda$, such that $\Lambda_\alpha \cap \Lambda_\beta$ is finite for all $\alpha \neq \beta$.  Such a collection may be obtained, for example, by putting $\Lambda$ in bijective correspondence with $\Q$, and choosing $\Lambda_\alpha$ to be a sequence of distinct rational numbers converging to $\alpha$.

For each $\alpha$, let $f_\alpha \in \Homeo[0, \infty)$ satisfy 
$f_\alpha(2n) = 2n+1$, and $f_\alpha(2n+1) \in 2 \Lambda_\alpha$ for all $n \in  \Lambda_\alpha$, so that $f_\alpha$ maps each interval of $Y_{\Lambda_\alpha}$ to the interior of a connected component of $\R - Y_{\Lambda_\alpha}$.  Again abusing notation, identify these with the homeomorphisms $f_\alpha \times \id$ of $[0, \infty) \times \partial \bar{M}$.
Since $\R$ is uncountable, there is some $\alpha$ and some $\beta$ such that $f_\alpha$ and $f_\beta$ are in the same left-translate $g_i W$.  Thus, $f_\alpha^{-1} f_\beta$ and $f_\beta^{-1}f_\alpha$ are both in $W^2$.  
We will now use these two elements to conjugate (a suitable decomposition of) homeomorphisms supported on $X \times \partial \bar{M}$ into $A_\Lambda$, this will complete the proof.  

If $n \in \N - \Lambda_\alpha$, then $f(2n, 2n+1) \subset (2m, 2m+1)$ for some $m \in \Lambda_\alpha$.  If $m \notin \Lambda_\beta$, then $f_\beta^{-1}f_\alpha(2n, 2n+1)$ is contained in an interval of the form $(2k, 2k+1)$ where $k \in \Lambda_\beta$.  
Since $\Lambda_\alpha \cap \Lambda_\beta$ is finite, we conclude that, with the exception of finitely many values of $n$, the map $f_\beta^{-1}f_\alpha$ takes intervals of the form $(2n, 2n+1)$ where $n\notin \Lambda_\alpha$ into some interval in $Y_{\Lambda_\alpha} \subset Y_\Lambda$.  
Reversing the role of $\alpha$ and $\beta$, the same argument shows that, with only finitely many exceptions, $f_\alpha^{-1}f_\beta$ takes every interval of the form $(2n, 2n+1), n\notin \Lambda_\beta$, into some interval of $Y_\Lambda$.  
Let $F$ denote the union of these two exceptional sets of integers.  Thus we can decompose $X$ as the union of 
$X_1 : = \bigcup_{n \notin (\Lambda_\alpha  \cup F)}(2n, 2n+1) \times \bar{M}$, with $X_2 := \bigcup_{n \notin (\Lambda_\beta  \cup F)}(2n, 2n+1) \times \bar{M}$ and $X_3 := \bigcup_{n \in F}(2n, 2n+1) \times \bar{M}$, hence $G(X) = G(X_1)G(X_2)G(X_3)$.  

As we have shown above,  
$$f_\beta^{-1}f_\alpha G(X_1) (f_\beta^{-1}f_\alpha)^{-1} \subset G(Y_\Lambda) \subset W^8, \text{and similarly}$$
$$f_\alpha^{-1}f_\beta G(X_2) (f_\alpha^{-1}f_\beta^{-1}) \subset G(Y_\Lambda) \subset W^8.$$   
Thus $G(X_1) \subset W^{12}$ for $i=1,2$.  
Since $X_3$ is compact, we also have that $G(X_3) \subset W^n$ by Proposition \ref{prop_compact}.    It follows that $G(X) \subset W^{24+n}$, which proves the theorem.  

\end{proof} 


\section{General case}
  
Let $M$ be a topological manifold, either compact or homeomorphic to the interior of a compact manifold with boundary, and let $X \subset M$ be the union of a Cantor set and a (possibly empty) finite set.  Remark \ref{rk_emptytofinite} reduces the proof to the case where $X$ is a Cantor set, since if $X'$ is the set of isolated points of $X$, then $M - X'$ is homeomorphic to interior of a compact manifold with boundary, and $\Homeo(M, X) \cong \Homeo(M-X', X-X')$.  Thus, we assume going forward that $X$ is a Cantor set.  

\begin{proof}[Proof for $X \subset M$ a Cantor set]
Let $W \subset \Homeo(M, X)$ be a symmetric set such that $\Homeo(M, X) = \bigcup_i g_i W$.   For convenience, fix a metric on $M$.  Then a neighborhood basis of the identity on $M$ is given by sets of the form 
$$U_{K, \delta} := \{f \mid d(f(x), x)< \delta \text{ for all }x\in K\}$$
 as $K$ ranges over compact sets of $M$, and $\delta >0$.  

Call a closed ball in $M$ a {\em separating ball} if its boundary is disjoint from $X$ and separates $X$ into two components.  By Lemma \ref{W2dense}, $W^2$ is dense in a neighborhood of the identity of $\Homeo(M,X)$.  Thus, we may choose $\epsilon>0$ such that the following holds:

\begin{lemma}   \label{lem:expanding}
Let $x_1, x_2, ... x_n$ be any collection of points in $X$, with $d(x_i, x_j) > 2\epsilon$ for all $i \neq j$.  If $D_i$ and $E_i$ are both separating balls contained in the $\epsilon$-ball about $x_i$, then for any $\delta > 0$, there exists $f \in W^2$ such that, for all $i = 1, 2, ..., n$ the ball $f(D_i)$ is hausdorff distance at most $\delta$ from $E_i$.   
\end{lemma}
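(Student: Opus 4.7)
The plan is to construct a homeomorphism $h \in \Homeo(M,X)$ near the identity that approximately sends each $D_i$ onto $E_i$ in Hausdorff distance, and then to invoke density of $W^2$ in a neighborhood of the identity (Lemma \ref{W2dense}) to produce an $f \in W^2$ close enough to $h$.

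First, by Lemma \ref{W2dense}, $W^2$ is dense in some open neighborhood $U$ of $\id$ in $\Homeo(M,X)$. In the compact-open topology, $U$ contains a basic neighborhood of the form $U_{K,\eta} := \{g : \sup_{x \in K} d(g(x), x) < \eta\}$ for some compact $K \subset M$ and some $\eta > 0$. I fix $\epsilon > 0$ with $2\epsilon < \eta$, chosen small enough that for every $x \in X$ the closed ball $\overline{B(x, 2\epsilon)}$ lies inside a coordinate chart of $M$; this is possible since, when $X$ is a Cantor set, $X$ is compact. This $\epsilon$ depends only on $W$ and serves as the universal constant of the lemma.

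Given any configuration $x_1, \ldots, x_n \in X$ with pairwise distances exceeding $2\epsilon$ and separating balls $D_i, E_i \subset B(x_i, \epsilon)$, the balls $B(x_i, \epsilon)$ are pairwise disjoint. It then suffices to produce, for each $i$, a homeomorphism $h_i$ supported in $B(x_i, \epsilon)$, preserving $X$ setwise, with $h_i(D_i)$ at Hausdorff distance less than $\delta/2$ from $E_i$; the product $h = h_1 \cdots h_n$ is supported in $\bigcup_i B(x_i, \epsilon)$, displaces no point by more than $2\epsilon < \eta$, and hence lies in $U_{K,\eta} \subset U$. Density of $W^2$ in $U$ then produces $f \in W^2$ with $\sup_{x \in \bigcup_i D_i} d(f(x), h(x)) < \delta/2$, and the triangle inequality gives $d_H(f(D_i), E_i) < \delta$ for each $i$.

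The main obstacle is the construction of each $h_i$. Inside the chart containing $B(x_i, \epsilon)$, the intersections $D_i \cap X$ and $E_i \cap X$ are non-empty clopen subsets of $X$, hence themselves Cantor sets, and are abstractly homeomorphic. One must promote this abstract homeomorphism to an ambient one fixing the boundary of the ball, preserving $X$, and carrying $D_i$ to a set Hausdorff-close to $E_i$. In general $X$ could be wildly embedded (e.g.\ an Antoine necklace in dimension $\ge 3$), so one cannot appeal to the fact that all tame Cantor sets are ambiently equivalent. What saves the argument is that only Hausdorff-$\delta/2$ closeness is required, not set-theoretic equality: after first arranging an ambient homeomorphism that matches the clopen Cantor pieces $D_i \cap X$ and $E_i \cap X$, the freedom to perturb the non-$X$ portion of the chart suffices to bring $h_i(D_i)$ within Hausdorff distance $\delta/2$ of $E_i$.
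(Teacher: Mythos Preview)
Your overall approach matches the paper's: the paper's entire proof is the phrase ``the proof is immediate,'' and what is implicit is exactly your plan --- build $h \in \Homeo(M,X)$ supported in the disjoint $\epsilon$-balls with $h(D_i)$ equal (or close) to $E_i$, observe that such an $h$ lies in the neighborhood where $W^2$ is dense, and approximate.

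However, your final paragraph does not hold together. You correctly flag that for a wildly embedded Cantor set one cannot simply produce an ambient homeomorphism carrying $D_i \cap X$ onto $E_i \cap X$. But your proposed resolution then \emph{begins} with ``after first arranging an ambient homeomorphism that matches the clopen Cantor pieces $D_i \cap X$ and $E_i \cap X$,'' which is precisely the step you just called into question. The Hausdorff-distance relaxation you appeal to only loosens the requirement on where the \emph{ball} $D_i$ lands; it does nothing to bypass the need for an $X$-preserving ambient motion carrying one clopen piece toward the other, since any $h_i \in \Homeo(M,X)$ must still send $D_i \cap X$ to \emph{some} clopen piece of $X$, and you have not argued that one exists with the desired effect on $D_i$. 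The paper's ``immediate'' simply does not engage with this; in dimension $2$ (the case motivating the paper) all embedded Cantor sets are tame and the construction of $h_i$ really is routine, but if you want a dimension-independent statement you should either justify the existence of $h_i$ directly or make explicit that tameness of $X$ is being used.
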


\noindent The proof is immediate, and the statement also holds if we replace the condition that $D_i$ and $E_i$ are separating balls with the condition that $\overline{D_i} \cap X = \emptyset$ and $\overline{E_i} \cap X = \emptyset$ holds for all $i$.

Now take two $2\epsilon$-separated sets, say $\{ x_1, x_2, ..., x_m\}$ and $\{x'_1, x'_2, ..., x'_{m'}\}$, as in Lemma \ref{lem:expanding}, and let $D_i \subset B_\epsilon(x_i)$ and $D'_i \subset B_\epsilon(x'_i)$ be separating balls such that
\begin{enumerate}
\item The union of the $D_i$ and the $D'_i$ cover $X$, and 
\item Each point of $X$ is contained in at most one element of $\{D_1, ...,  D_n, D'_1, ..., D'_{m'}\}$.  
\end{enumerate}

We now apply Lemma \ref{techlem} to sets consisting of separating balls.  Let $\mathcal{A}$ be the collection of sets satisfying the following condition: 
$A \in \mathcal{A}$ iff $A$ consists of a union of disjoint separating balls, with exactly one ball in each metric ball $B_\epsilon(x_i)$. 
Let $\mathcal{A'}$ be the analogous set for the metric balls $B_\epsilon(x_i')$.  It is easily checked that $\mathcal{A}$ and $\mathcal{A'}$ satisfy the conditions of Lemma \ref{techlem}, so we conclude that there exists $A \in \mathcal{A}$ and $A' \in \mathcal{A'}$ so that $G(A) \subset W^8$ and $G(A') \subset W^8$.  

Let $Z$ be a connected, simply connected neighborhood of $X$ such that the interior of $M-Z$ together with the union of the discs $D_i$ and $D'_i$ covers $M$.  By the fragmentation lemma (following Edwards--Kirby \cite{EK}), there exists a neighborhood $U$ of the identity in $M$ such that any $f \in U$ can be written as $g_0 g_1 g_2$ with $g_0$ supported on $Z$, $g_1$ supported on $\bigcup_i D_i$ and $g_2$ supported on $\bigcup_i D'_i$.  

Since each point of $X$ is contained in at most one such supporting set, we additionally have that if $f(X) = X$, then each $g_i$ must preserve $X$ as well.   Repeating the proof (verbatim) from the first case where $X = \emptyset$ shows that $g_0 \in W^{n}$, where $n$ is a constant that depends only on the dimension of $M$.   
By Lemma \ref{lem:expanding}, we can find $h_1$ and $h_2$ in $W^2$ such that $D \subset h_1(A)$ and $D' \subset h_2(A')$.  Thus, for $i = 1, 2$ we have  $f_i^{-1} g_i h_i \in W^8$, hence $g_i \in W^{10}$.   This shows that $g \in W^{20+n}$, which is what we needed to show.  

\end{proof}

\begin{acknowledge}
The author is partially supported by NSF grant  DMS-1844516 
and a Sloan fellowship. 
Thanks to N. Vlamis and C. Rosendal for comments, and to the participants of the 2019 AIM workshop on mapping class groups of infinite type surfaces for the encouragement to publish this note. 
\end{acknowledge}


\bibliographystyle{plain}

\bibliography{biblio}

\begin{thebibliography}{1}

\bibitem{APV}
Javier Aramayona, Priam Patel, and Nicholas Vlamis.
\newblock The first integral cohomology of pure mapping class groups.
\newblock Preprint, arXiv:1711.03132 [math.GT], 2018.

\bibitem{Calegari}
Danny Calegari.
\newblock Circular groups, planar groups, and the {E}uler class.
\newblock In {\em Proceedings of the {C}asson {F}est}, volume~7 of {\em Geom.
  Topol. Monogr.}, pages 431--491. Geom. Topol. Publ., Coventry, 2004.

\bibitem{EK}
Robert~D. Edwards and Robion~C. Kirby.
\newblock Deformations of spaces of imbeddings.
\newblock {\em Ann. Math. (2)}, 93:63--88, 1971.

\bibitem{Scottish}
Fr\'{e}d\'{e}ric Le~Roux and Kathryn Mann.
\newblock Strong distortion in transformation groups.
\newblock {\em Bull. Lond. Math. Soc.}, 50(1):46--62, 2018.

\bibitem{MannAutCont}
Kathryn Mann.
\newblock Automatic continuity for homeomorphism groups and applications.
\newblock {\em Geom. Topol.}, 20(5):3033--3056, 2016.
\newblock With an appendix by Fr\'ed\'eric Le Roux and Mann.

\bibitem{Richards}
Ian Richards.
\newblock On the classification of noncompact surfaces.
\newblock {\em Trans. Amer. Math. Soc.}, 106:259--269, 1963.

\bibitem{RosendalSurfaces}
Christian Rosendal.
\newblock Automatic continuity in homeomorphism groups of compact 2-manifolds.
\newblock {\em Israel J. Math.}, 166:349--367, 2008.

\bibitem{RosendalAutCont}
Christian Rosendal.
\newblock Automatic continuity of group homomorphisms.
\newblock {\em Bull. Symbolic Logic}, 15(2):184--214, 2009.

\bibitem{RS}
Christian Rosendal and Slawomir Solecki.
\newblock Automatic continuity of homomorphisms and fixed points on metric
  compacta.
\newblock {\em Israel J. Math.}, 162:349--371, 2007.

\end{thebibliography}

\end{document}